\documentclass[12pt]{article}
\usepackage{amsmath}
\usepackage{amssymb}
\usepackage{amsthm}
\usepackage{framed}
\usepackage{mathrsfs}
\usepackage[normalem]{ulem}
\usepackage[all]{xy}
\usepackage{multicol}
\usepackage{lipsum}
\usepackage{enumitem}

\theoremstyle{theorem}
\newtheorem{theorem}{Theorem}
\newtheorem{corollary}{Corollary}
 
\theoremstyle{definition}
\newtheorem{definition}{Definition}

\newtheorem{lemma}{Lemma}

\addtolength{\oddsidemargin}{-.825in}
	\addtolength{\evensidemargin}{-.825in}
	\addtolength{\textwidth}{1.9in}

	\addtolength{\topmargin}{-1.2in}
	\addtolength{\textheight}{1.75in}






%
%
%
%
 %
 %
 %
 %
   \newcommand{\ee}{\ensuremath{\mathbf{\boldsymbol{\mathrm{e}}_1}}}
   \newcommand{\eee}{\ensuremath{\mathbf{\boldsymbol{\mathrm{e}}_2}}}



   \newcommand{\h}{\ensuremath{\mathbb{H}}}


 %
 %
 %

\newcommand{\z}{\zeta}
\newcommand{\w}{\omega}
\newcommand{\R}{\mathbb{R}}
\newcommand{\Bi}{\mathbb{B}\mathbb{C}}
\newcommand{\C}{\mathbb{C}}

\newcommand{\centre}{\begin{center}\vspace{-2pt}}
\newcommand{\stopcentre}{\vspace{-2pt}\end{center}}



\usepackage{graphicx}
\graphicspath{%
    {converted_graphics/}
    {C:/Bicomplex/converted_graphics/}
}
\begin{document}

\title{Bicomplex Matrices and Operators: Jordan Forms, Invariant Subspace Lattice Diagrams, and Compact Operators}
\author{William Johnston and Rebecca G. Wahl\vspace{-0ex}}
\markright{Bicomplex Linear Transformations}
\maketitle 

\begin{abstract} \noindent {This paper extends topics in linear algebra and operator theory for linear transformations on complex vector spaces to those on bicomplex Hilbert and Banach spaces. For example, Definition 3 for the first time defines a bicomplex vector space, its dimension, and its basis in terms of a corresponding vectorial idempotent representation, and the paper shows how an $n \times n$ bicomplex matrix's idempotent representation leads to its bicomplex Jordan form and a description of its bicomplex invariant subspace lattice diagram. Similarly, the paper rigorously defines for the first time ``bicomplex Banach and Hilbert'' spaces, and then it expands the theory of compact operators on complex Banach and Hilbert spaces to those on bicomplex Banach and Hilbert spaces.  In these ways, the paper shows that complex linear algebra and operator theory are not necessarily built upon the broadest and most natural set of scalars to study. Instead, when using the idempotent representation, such results surprisingly generalize in a straightforward way to the higher-dimensional case of bicomplex values.}
\end{abstract} 

\noindent \section{Preliminaries.} 

\noindent Bicomplex numbers\footnote{This introduction is adapted from \cite{JM}.} are of the form $\z=z_1+jz_2$, where $z_1=x_1+iy_1$ and $z_2=x_2+iy_2$ are complex numbers, $i^2={-1}$, and $j^2={-1}$.\ Corrado Segre first described them in 1892 \cite{Segre}.\ A host of compelling analytic function properties extend outward to bicomplex functional analysis in often straightforward ways.\ These include generalizations of Euler's formula, the representation of complex-analytic functions as power series, and Cauchy's integral formula.\ See   \cite{Luna3, Charak, JM, Luna1, Luna2, Price, zeta, Ronn} for such results.\ The theory is also useful in applied settings (see\ \cite{Anast, hyperbolic, Rochon, Rochon2}).\ The set of bicomplex numbers $\Bi$ is a four real dimensional extension of the complex numbers $\C$ in the sense that $\Bi \big|_{z_2=0}=\C$.\ Here $\z= x_1+iy_1+jx_2+ijy_2$, where $ij=ji$ so that $(ij)^2=(-1)^2=1$.\ Given $\z$ and $\w=w_1+jw_2$ in $\Bi$,\centre $\z+\w=(z_1+w_1)+j(z_2+w_2)$ and\\ $\z \cdot \w = (z_1w_1-z_2w_2)+j(z_1w_2+w_1z_2)$.\stopcentre These equations agree with complex addition and multiplication when $\z, \w \in \C$, and $\Bi$ is a commutative algebraic ring, as not all bicomplex numbers have a multiplicative inverse.\  But a so-called idempotent representation \centre${\z= (z_1-iz_2)\ee+(z_1+iz_2)\eee \equiv \z_1\ee + \z_2\eee}\,   $\stopcentre importantly gives a better way to think about bicomplex numbers and associated mathematical structures such as analytic functions (see \cite{JM}) and, in particular, linear transformations, as this paper shows. Here $\ee=\displaystyle {(1+ij)/}{2}$ and $\displaystyle \eee={(1-ij)/}{2}$, and $\z_1, \z_2 \in \C$ are the idempotent components.\ The equalities $\ee^2=\ee$, $\eee^2=\eee$, and $\ee\eee = \eee\ee = 0$ result in the following property:\ For $\z= \z_1\ee + \z_2\eee$ and $\w = \w_1\ee + \w_2\eee$, the product \begin{center}$\z \w = \z_1 \w_1 \ee + \z_2 \w_2 \eee$\end{center} computes idempotent componentwise!\ Hence $\z^{n}= \z_1^{n} \ee + \z_2^{n} \eee$, and any (multi-branched) $n$th root translates into complex idempotent components (say for $n \in \mathbb{N}$) as \centre$\z^{1/n}= \z_1^{1/n} \ee + \z_2^{1/n} \eee$.\stopcentre Simple algebra also proves two extremely important special items.\ First, $\z$ is complex if and only if $\z_1=\z_2=\z$.\ Second, the multiplicative noninvertible elements $\z$ are exactly the values that have either $\z_1=0$ or $\z_2=0$.\ 

A powerful so-called hyperbolic norm evolves from consideration of bicomplex numbers with real idempotent components.\ This norm equals the Euclidean norm when applied to complex numbers, but it outputs moduli for noncomplex numbers that are in general not real! Hence its properties form a generalization of the standard defining properties of a norm, obtained by replacing the nonnegative real numbers by a partially ordered set of bicomplex numbers called the nonnegative hyperbolic numbers $\h^+$, which the following definition describes.\ To distinguish it from a norm that has the standard properties, mathematicians sometimes call the hyperbolic norm a poset-valued norm (cf. \cite{Luna1}).

\begin{definition} [Hyperbolic Numbers]
 The hyperbolic numbers \begin{center}$\h=\{x+ijy:\ x,y \in \R\}$\end{center} are a strict subset of $\Bi$.\ The subset of nonnegative hyperbolic numbers is \centre$\h^+ = \{\eta_1\ee + \eta_2 \eee:\eta_1 , \eta_2 \ge 0 \}$.\stopcentre
For any $\z = \z_1\ee + \z_2\eee \in \Bi$, the hyperbolic norm is defined as \centre$|\z |_{_\h}= |\z_1|\ee + |\z_2|\eee$,\stopcentre  which is an element of $\h^+$ and satisfies $|\z \cdot \w|_{_\h}=|\z|_{_\h}\cdot |\w|_{_\h} $ for any $\z,\w \in \Bi$.\  
\end{definition}

A partial ordering on $\h^+$ now follows.\ For two elements $\z=\eta_1\ee+\eta_2\eee$ and $\w=\psi_1\ee+\psi_2\eee$ in $\h^+$, define the hyperbolic inequality $\z <_{_\h} \w$ when both $\eta_1 < \psi_1$ and $\eta_2 < \psi_2$.\ A less-than-or-equal-to partial ordering is similarly defined.\ The hyperbolic norm of any bicomplex number is a hyperbolic number.\ That norm can therefore be used via this hyperbolic inequality to define open balls of bicomplex numbers that have hyperbolic radii.\ The balls are of the form \centre $B_{_\h}(c,R)=\{\z:|\z-c|_{_\h}<_{_\h} R\}$,\stopcentre where the center $c \in \Bi$ and the hyperbolic radius $R \in \h^+$.\ The collection of balls $B_{_\h}(c,R)$ can also be thought of as the neighborhood basis open sets in a topology induced by the hyperbolic norm.

A bicomplex number's Euclidean form $\eta=z_1+jz_2$ produces a choice of three natural conjugations (cf. \cite[p. 562]{nDimensional} or \cite[p. 593]{KumarSingh}):
\begin{enumerate}
  \item $\overline{\z}=\overline{z}_1+j\overline{z}_2$;
  \item $\overline{\z}={z}_1-j{z}_2$; or
  \item $\overline{\z}=\overline{z}_1-j\overline{z}_2$
\end{enumerate}
The first and third choices both generalize conjugation for complex numbers but the third is most natural because it corresponds with the only natural conjugations in terms of the idempotent component; namely, for $\z= \z_1\ee + \z_2\eee$, \centre $\overline{\z}=\overline{z}_1-j\overline{z}_2= \overline{\z}_1\ee + \overline{\z}_2\eee$ \stopcentre This calculation follows from the formulas for $\z_1$ and $\z_2$ in terms of $z_1$ and $z_2$, along with the simple facts that $\overline{\ee}=\ee$ and $\overline{\eee}=\eee$.\ Furthermore, \centre   $\z \cdot \overline{\z}=(\z_1\ee + \z_2\eee)( \overline{\z}_1\ee + \overline{\z}_2\eee)= |\z_1|^2\ee + |\z_2|^2\eee=|\z|^2_{_\h} $,\stopcentre which says this conjugation in its relationship with the hyperbolic norm generalizes complex conjugation in its relationship with the complex modulus. In all that follows in this paper, this third choice is always used as the definition of the bicomplex conjugate, which we formalize here.

\begin{definition} The bicomplex conjugate of $\z= \z_1\ee + \z_2\eee$ is $\overline{\z}=\overline{\z}_1\ee + \overline{\z}_2\eee$.
\end{definition}

Finally, we introduce the notion of a bicomplex vector space, expanding the definition of a vector space on complex scalars. The expansion to the bicomplex setting makes special use of the idempotent representation.

\begin{definition}
A bicomplex vector space, taken over bicomplex scalars, is a set of well-defined elements, called ``bicomplex vectors,'' that may be decomposed using the bicomplex idempotent structure into complex idempotent components as $v=v_1\ee+v_2\eee$, where the resulting corresponding sets of idempotent ``vectors''  $\{v_1\}$ and $\{ v_2 \}$, here taken over complex scalars unless noted otherwise, are each vector spaces with the same dimension.\ The bicomplex vector space $V=\{ v \}$ has finite dimension $n^2$ when $\{ v_1 \}$ and  $\{ v_2 \}$ both have dimension $n$, where a basis for $V$ is formed from all $n^2$ combinations of the bases for the two idempotent components $\{ b_{1,k} \}_{k=1}^n$ and  $\{ b_{2,j} \}_{j=1}^n$, respectively, as 
$\{ b_{k,j}\} = \{ b_{1,k}\ee + b_{2,j}\eee \} $.   
\end{definition}

This paper examines such vector spaces with well-defined complete norms defined on each idempotent component, so that each component vector space forms a Banach space, calling such vector spaces ``bicomplex Banach spaces.'' In addition, it examines such vector spaces with well-defined inner products, so that each component vector space forms a Hilbert space, calling such vector spaces ``bicomplex Hilbert spaces.'' Examining the bicomplex Banach and Hilbert spaces' idempotent structure, it will also define and examine types of linear transformations on such spaces.  

\noindent \section{Bicomplex Matrices}

We first consider the extension of complex linear algebra to linear transformations on finite $n$-dimensional bicomplex vector spaces. In what follows in this section, each matrix is $n \times n$ unless otherwise noted.\ Writing each entry of a bicomplex matrix $A$ in terms of its idempotent representation produces an idempotent representation for the matrix $A=A_1\ee+A_2\eee$. Multiplication of bicomplex matrices is defined via row times column products in the same way as for complex matrices.\ A delightful fact is that matrix multiplication computes idempotent componentwise, just as multiplication computes componentwise for scalars! See, e.g., \cite[Chapter 5]{Luna3}.  This fact is so important that we call it the Fundamental Theorem for Bicomplex Matrices.

\begin{theorem} {\bf The Fundamental Theorem for Bicomplex Matrices:} Given bicomplex matrices $A=A_1\ee+A_2\eee$ of size $m \times k$ and $B=B_1\ee+B_2\eee$ of size $k \times n$, then $A \cdot B = A_1B_1\ee+A_2B_2\eee$ is $m \times n$. 
\end{theorem}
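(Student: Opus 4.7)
The plan is to reduce the theorem to a direct entrywise computation that uses only (i) the standard definition of matrix multiplication and (ii) the three idempotent identities $\ee^{2}=\ee$, $\eee^{2}=\eee$, and $\ee\eee=\eee\ee=0$ recalled in the Preliminaries. Because these identities are the scalar-level analog of the ``componentwise multiplication'' statement already established for bicomplex numbers, I expect the matrix version to follow from the scalar version applied one entry at a time.

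First, I would fix indices $1\le p\le m$ and $1\le q\le n$ and write the $(p,q)$ entry of each matrix in its idempotent form: $a_{p\ell}=(a_{1})_{p\ell}\ee+(a_{2})_{p\ell}\eee$ and $b_{\ell q}=(b_{1})_{\ell q}\ee+(b_{2})_{\ell q}\eee$ for $1\le\ell\le k$, where $(a_{1})_{p\ell},(a_{2})_{p\ell},(b_{1})_{\ell q},(b_{2})_{\ell q}\in\C$ are the idempotent entries of $A_{1},A_{2},B_{1},B_{2}$. Then $(AB)_{pq}=\sum_{\ell=1}^{k}a_{p\ell}b_{\ell q}$ by the definition of bicomplex matrix multiplication, which holds for arbitrary scalar rings.

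Next, I would expand each product $a_{p\ell}b_{\ell q}$ using bilinearity of bicomplex multiplication and collapse the four cross terms using the idempotent identities. The $\ee\eee$ and $\eee\ee$ terms vanish, and the $\ee^{2}$ and $\eee^{2}$ terms collapse to $\ee$ and $\eee$ respectively, yielding
\begin{equation*}
a_{p\ell}b_{\ell q}=(a_{1})_{p\ell}(b_{1})_{\ell q}\,\ee+(a_{2})_{p\ell}(b_{2})_{\ell q}\,\eee.
\end{equation*}
Summing over $\ell$ and pulling $\ee$ and $\eee$ out of each sum (scalars commute with $\ee$ and $\eee$) gives
\begin{equation*}
(AB)_{pq}=\Bigl(\sum_{\ell=1}^{k}(a_{1})_{p\ell}(b_{1})_{\ell q}\Bigr)\ee+\Bigl(\sum_{\ell=1}^{k}(a_{2})_{p\ell}(b_{2})_{\ell q}\Bigr)\eee=(A_{1}B_{1})_{pq}\,\ee+(A_{2}B_{2})_{pq}\,\eee,
\end{equation*}
where the last equality is the definition of complex matrix multiplication applied to the idempotent components. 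Since this holds for every pair $(p,q)$, it yields the claimed identity $AB=A_{1}B_{1}\,\ee+A_{2}B_{2}\,\eee$, and the size $m\times n$ is automatic because both $A_{1}B_{1}$ and $A_{2}B_{2}$ are products of an $m\times k$ complex matrix and a $k\times n$ complex matrix.

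There is really no central obstacle here; the only subtlety worth being careful about is bookkeeping, namely verifying that the inner ``componentwise-scalar-multiplication'' step is valid even though the entries of $A$ and $B$ are not ordinary complex numbers but elements of the commutative ring $\Bi$. Commutativity of $\Bi$ and the fact that matrix multiplication is defined by the same row-times-column formula for any commutative ring justify the interchange of sums and of the idempotent scalars $\ee,\eee$ with complex scalars performed above.
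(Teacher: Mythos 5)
Your proposal is correct and is essentially the paper's own argument: the paper gives the block-level expansion using $\ee^{2}=\ee$, $\eee^{2}=\eee$, $\ee\eee=\eee\ee=0$, and then an ``equivalent proof'' that is exactly your entrywise computation $(ab)_{ij}=\sum_{k}a_{ik}b_{kj}$ with the idempotent identities collapsing the cross terms. Nothing further is needed.
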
 

\begin{proof} Since $\ee^2=\ee$, $\eee^2=\eee$, and $\ee\eee = \eee\ee = 0$,\\ \hspace*{31pt}$A \cdot B = (A_1\ee+A_2\eee)(B_1\ee+B_2\eee)\\  \hspace*{56pt} =A_1B_1\ee^2+A_1B_2\ee\eee+A_2B_1\eee\ee+A_2B_2\eee^2\\ \hspace*{56pt}=A_1B_1\ee+A_2B_2\eee$. \vspace*{4pt} 

\noindent For an equivalent proof, denote $AB=[(ab)_{ij}]$, with $A=[a_{ik}]$ and $B=[b_{kj}]$. Each term is \\ \hspace*{31pt} $(ab)_{ij}=\sum\limits_{k=1}^m a_{ik}b_{kj}=\sum\limits_{k=1}^m (\hat{a}_{ik}\ee +\tilde{a}_{ik}\eee )(\hat{b}_{kj}\ee +\tilde{b}_{kj}\eee)\\ \hspace*{60.5pt} =\sum\limits_{k=1}^m (\hat{a}_{ik}\hat{b}_{kj}\ee +\tilde{a}_{ik}\tilde{b}_{kj}\eee )=\sum\limits_{k=1}^m (\hat{a}_{ik}\hat{b}_{kj})\ee +\sum\limits_{k=1}^m(\tilde{a}_{ik}\tilde{b}_{kj})\eee $.  
\end{proof}

The following three corollaries for $m \times k$ matrices immediately result.\ For the third corollary, we define a bicomplex number $\lambda$ as an eigenvalue for a bicomplex matrix $A$ when there exists a column vector $\vec{v}= \vec{v}_1\ee +\vec{v}_2\eee$ (the corresponding eigenvector) with {\it both} idempotent components $\vec{v}_1$ and $\vec{v}_2$ nonzero so that $A\vec{v} = \lambda \vec{v}$.\footnote{We note that for a matrix $A=A_1\ee+A_2\eee$, a column vector $\vec{v}$ with $A\vec{v} = \lambda \vec{v}$ could have been described as an eigenvector so long as $\vec{v}$ was nonzero, but the specific case of $\vec{v}$ having exactly one nonzero idempotent component reduces to the eigenvalue/eigenvector study of the corresponding matrix component.\ For example, if $\vec{v}_1$ is nonzero but $\vec{v}_2$ is the zero vector, then the examination reduces to the study of the complex matrix $A_1$.\ Our eigenvalue definition matches other authors'; for example, see \cite[Definition 2.3]{nDimensional}.\ A related commentary is \cite[pp. 27--28]{Luna3}.} 

\begin{corollary} For a bicomplex matrix $A=A_1\ee+A_2\eee$, the inverse $A^{-1}$ exists exactly when both $A_1^{-1}$ and $A_2^{-1}$ exist, and then $A^{-1}=A_1^{-1}\ee+A_2^{-1}\eee$.
\end{corollary}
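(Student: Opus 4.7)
The plan is to deduce this corollary directly from the Fundamental Theorem for Bicomplex Matrices together with the uniqueness of the idempotent decomposition of each matrix entry. The underlying principle is that since matrix multiplication splits idempotent componentwise, invertibility must also split idempotent componentwise.

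First I would dispose of the easy direction. Assume $A_1^{-1}$ and $A_2^{-1}$ exist as ordinary complex $n\times n$ matrices, and define the candidate inverse $B=A_1^{-1}\ee+A_2^{-1}\eee$. Since the identity matrix $I_n$ is a complex matrix, it admits the idempotent representation $I_n=I_n\ee+I_n\eee$. Applying the Fundamental Theorem twice gives $AB=(A_1A_1^{-1})\ee+(A_2A_2^{-1})\eee=I_n\ee+I_n\eee=I_n$ and similarly $BA=I_n$, so $A^{-1}=B$ exists and has the claimed form.

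For the converse, suppose $A^{-1}$ exists in the bicomplex matrix algebra. Write $A^{-1}=B_1\ee+B_2\eee$ for some complex $n\times n$ matrices $B_1,B_2$. By the Fundamental Theorem, $AA^{-1}=A_1B_1\ee+A_2B_2\eee$, and this must equal $I_n=I_n\ee+I_n\eee$. Here the key step is the uniqueness of the idempotent decomposition: two bicomplex matrices coincide if and only if both of their idempotent components coincide. This holds entrywise, because $\ee$ and $\eee$ are linearly independent over $\C$ (equivalently, the two scalar components $\z_1,\z_2$ of a bicomplex number are uniquely determined by $\z$). Comparing idempotent components yields $A_1B_1=I_n$ and $A_2B_2=I_n$, and the analogous computation for $A^{-1}A=I_n$ gives $B_1A_1=I_n$ and $B_2A_2=I_n$. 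Hence $A_1^{-1}=B_1$ and $A_2^{-1}=B_2$ both exist.

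There is no real obstacle: the only subtlety worth flagging explicitly is the uniqueness of the idempotent decomposition, which is what lets me read off the matching of components from a single matrix equation. Everything else is an immediate consequence of the Fundamental Theorem, and the formula $A^{-1}=A_1^{-1}\ee+A_2^{-1}\eee$ falls out of either direction of the argument.
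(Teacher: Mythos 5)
Your proof is correct and follows exactly the route the paper intends: the corollary is stated as an immediate consequence of the Fundamental Theorem for Bicomplex Matrices, and your argument simply writes out that consequence in both directions, including the (worthwhile) explicit appeal to uniqueness of the idempotent decomposition when reading off $A_1B_1=I_n$ and $A_2B_2=I_n$. No gaps; this matches the paper's approach.
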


\begin{corollary} Define the determinant of a bicomplex matrix $A$ in terms of its idempotent component determinants; i.e., $\det A \equiv \det A_1\ee +\det A_2\eee$. Then $A^{-1} $ exists exactly when $\det A$ is bicomplex invertible; i.e., when $\det A_1$ and $\det A_2$ are both nonzero.
\end{corollary}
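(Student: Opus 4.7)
The plan is to chain together the previous corollary with the classical complex determinant criterion and the characterization of invertible bicomplex numbers already recorded in the preliminaries. Concretely, from the corollary immediately above, $A^{-1}$ exists if and only if both complex matrices $A_1$ and $A_2$ are invertible. Standard complex linear algebra says $A_k$ is invertible if and only if $\det A_k \neq 0$ for $k=1,2$. Hence $A^{-1}$ exists if and only if both $\det A_1 \neq 0$ and $\det A_2 \neq 0$.

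Next I would connect this to bicomplex invertibility of $\det A \equiv \det A_1\ee + \det A_2\eee$. The preliminaries establish the key fact that the multiplicatively noninvertible bicomplex numbers are exactly those of the form $\z = \z_1\ee + \z_2\eee$ with $\z_1=0$ or $\z_2=0$; equivalently, $\z$ is bicomplex invertible if and only if both idempotent components are nonzero. Applying this to $\z = \det A$ gives precisely the equivalence ``$\det A$ is bicomplex invertible $\Longleftrightarrow$ $\det A_1 \neq 0$ and $\det A_2 \neq 0$,'' which, combined with the prior step, yields the desired biconditional.

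I would write this out as a short two-sentence proof: one sentence invoking the previous corollary together with the complex determinant test, and one sentence quoting the idempotent characterization of invertibility. The main obstacle, such as it is, is simply a presentational one: one should make sure the reader sees that the idempotent definition $\det A \equiv \det A_1\ee + \det A_2\eee$ is being taken as a \emph{definition} in the statement, so that no independent combinatorial computation of ``the determinant of a bicomplex matrix'' is needed. Once the definition is in hand, the Fundamental Theorem (used to get the previous corollary) has already done all the real work, and the present corollary is just a dictionary translation between ``$A^{-1}$ exists,'' ``both $\det A_k \neq 0$,'' and ``$\det A$ is bicomplex invertible.''
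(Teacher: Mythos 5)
Your proof is correct and follows exactly the route the paper intends: the paper states this corollary (together with the one on $A^{-1}=A_1^{-1}\ee+A_2^{-1}\eee$) as an immediate consequence of the Fundamental Theorem, relying on the complex determinant test and the preliminaries' characterization that a bicomplex number is invertible precisely when both idempotent components are nonzero. No gap; your two-sentence write-up is essentially the paper's (implicit) argument made explicit.
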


\begin{corollary} A bicomplex number $\lambda$ is an eigenvalue for a bicomplex matrix $A$ exactly when $\det (A - \lambda I)=0$.
\end{corollary}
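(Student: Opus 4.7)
The plan is to reduce the bicomplex eigenvalue equation to two classical complex eigenvalue problems, one on each idempotent component, by invoking the Fundamental Theorem for Bicomplex Matrices (Theorem 1) together with Corollary 2.

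First I would write everything in idempotent form: $A=A_1\ee+A_2\eee$, $\lambda=\lambda_1\ee+\lambda_2\eee$, $\vec v=\vec v_1\ee+\vec v_2\eee$, and $I=I\ee+I\eee$. By Theorem 1, $A\vec v=(A_1\vec v_1)\ee+(A_2\vec v_2)\eee$ and $\lambda\vec v=(\lambda_1\vec v_1)\ee+(\lambda_2\vec v_2)\eee$. Because a bicomplex vector is zero precisely when both idempotent components vanish, the equation $A\vec v=\lambda\vec v$ is equivalent to the pair of complex equations $A_1\vec v_1=\lambda_1\vec v_1$ and $A_2\vec v_2=\lambda_2\vec v_2$.

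Next I would use the definition of eigenvalue given just before the corollary, which demands that \emph{both} $\vec v_1$ and $\vec v_2$ be nonzero. Thus $\lambda$ is a bicomplex eigenvalue of $A$ if and only if $\lambda_1$ is a classical eigenvalue of the complex matrix $A_1$ and $\lambda_2$ is a classical eigenvalue of the complex matrix $A_2$ (with independently chosen complex eigenvectors $\vec v_1,\vec v_2$, which can then be assembled into $\vec v$). At this point I would cite the standard complex fact that $\lambda_k$ is an eigenvalue of $A_k$ iff $\det(A_k-\lambda_k I)=0$.

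Finally I would translate back to the bicomplex statement. Writing $A-\lambda I=(A_1-\lambda_1 I)\ee+(A_2-\lambda_2 I)\eee$ and applying the bicomplex determinant from Corollary 2 gives
\[
\det(A-\lambda I)=\det(A_1-\lambda_1 I)\,\ee+\det(A_2-\lambda_2 I)\,\eee,
\]
and a bicomplex number vanishes iff both its idempotent components vanish. So $\det(A-\lambda I)=0$ is equivalent to the conjunction of the two classical characteristic equations, which we just showed is equivalent to $\lambda$ being a bicomplex eigenvalue of $A$.

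The only real subtlety, and where I would be careful, is the nonzero condition on both components of $\vec v$: one must check that requiring this stronger condition (rather than merely $\vec v\neq 0$) still gives the clean equivalence with $\det(A-\lambda I)=0$. The check goes through because the two idempotent eigenproblems are independent, so whenever $\det(A_1-\lambda_1 I)=\det(A_2-\lambda_2 I)=0$ one can select nonzero complex eigenvectors for each side and combine them, automatically producing an eigenvector with both components nonzero. No other step is more than bookkeeping on the idempotent decomposition.
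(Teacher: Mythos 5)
Your proof is correct and follows exactly the route the paper intends: the paper offers no written proof, stating only that this corollary (like Corollaries 1 and 2) ``immediately results'' from the Fundamental Theorem for Bicomplex Matrices, and your componentwise reduction via the idempotent decomposition, using the determinant definition of Corollary 2, is precisely that implicit argument. Your closing check---that vanishing of both component determinants lets one choose nonzero eigenvectors in each component and recombine them, matching the definition's requirement that both idempotent components of the eigenvector be nonzero---is the one point worth making explicit, and you handle it correctly.
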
 

The first main result of this section now follows for $n \times n$ bicomplex matrices. 

\begin{theorem} {\bf Bicomplex Jordan Form:} Write $A=A_1\ee+A_2\eee$ and put $A_1$ and $A_2$ in their (complex) Jordan forms as $A_1=P_1J_1P_1^{-1}$ and $A_2=P_2J_2P_2^{-1}$. Then a Jordan form of $A$ is $A=PJP^{-1}$, where $P=P_1\ee+P_2\eee$ and $J=J_1\ee+J_2\eee$. Because the Jordan forms of $A_1$ and $A_2$ are not unique (for example, the diagonal blocks for $J_1$ and $J_2$ may appear in any order), this Jordan form representation of $A$ can take on many different forms.
\end{theorem}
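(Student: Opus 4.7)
The plan is to invoke the Fundamental Theorem for Bicomplex Matrices together with Corollary 1 to reduce the candidate bicomplex similarity $PJP^{-1}$ to two independent complex similarities, one on each idempotent component. Because each such component similarity is by hypothesis the complex Jordan decomposition of $A_1$ or $A_2$, the desired identity $A = PJP^{-1}$ should fall out immediately from componentwise multiplication.

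First I would verify that $P = P_1\ee + P_2\eee$ is bicomplex invertible. Since $P_1$ and $P_2$ arise as the change-of-basis matrices in genuine complex Jordan decompositions, both are invertible over $\C$, and Corollary 1 then gives $P^{-1} = P_1^{-1}\ee + P_2^{-1}\eee$. Next I would compute $PJP^{-1}$ by applying the Fundamental Theorem twice, obtaining $(P_1 J_1 P_1^{-1})\ee + (P_2 J_2 P_2^{-1})\eee$, which by hypothesis equals $A_1\ee + A_2\eee = A$. That closes the similarity itself in essentially one line of algebra.

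The harder part, and the main obstacle I expect, is articulating why $J = J_1\ee + J_2\eee$ truly deserves the name ``Jordan form'' in the bicomplex setting rather than being merely some conjugate normal form. Each of $J_1$ and $J_2$ is block-diagonal with classical complex Jordan blocks attached to complex eigenvalues, so the combined matrix $J$ is also block-diagonal with diagonal blocks of the idempotent shape $N_1\ee + N_2\eee$, where $N_1,N_2$ are complex Jordan blocks attached to a common bicomplex eigenvalue $\lambda = \lambda_1\ee + \lambda_2\eee$, in the sense of Corollary 3. The subtlety is that the sizes, counts, and orderings of the blocks of $J_1$ need not match those of $J_2$, so forming a well-defined aggregated bicomplex block structure requires independent permutation similarities on each idempotent component.

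Such rearrangements are legitimate precisely because permutation matrices combine componentwise under Corollary 1, and it is exactly this independence of componentwise reshuffling — combined with the usual non-uniqueness of each complex Jordan form up to block ordering — that generates the many possible bicomplex Jordan representations of $A$ alluded to in the statement of the theorem.
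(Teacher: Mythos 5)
Your proposal is correct and follows the same route as the paper: compute $PJP^{-1}=(P_1\ee+P_2\eee)(J_1\ee+J_2\eee)(P_1^{-1}\ee+P_2^{-1}\eee)=P_1J_1P_1^{-1}\ee+P_2J_2P_2^{-1}\eee=A$ using the Fundamental Theorem, with invertibility of $P$ handled componentwise via Corollary 1. Your extra discussion of block alignment and reordering corresponds to the paper's remark after the proof that $J$ is only ``almost diagonal'' (off-diagonal entries can be $0$, $1$, $\ee$, or $\eee$) and that the representation is highly non-unique, so it is sound commentary rather than a different argument.
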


\begin{proof} Express the matrix in its idempotent form:\vspace{3pt}  \\ \hspace*{26.5pt} $A=A_1\ee+A_2\eee=P_1J_1P_1^{-1}\ee+P_2J_2P_2^{-1}\eee\\ \hspace*{39.6pt} =(P_1\ee+P_2\eee)(J_1P_1^{-1}\ee+J_2P_2^{-1}\eee)\\ \hspace*{39.6pt} =(P_1\ee+P_2\eee)(J_1\ee+J_2\eee)(P_1^{-1}\ee+P_2^{-1}\eee)=PJP^{-1}$.
\end{proof}

It is important to realize the Jordan form $J$ is ``almost diagonal.'' Similar to the complex Jordan form, $J$ has zeros in each entry except for possibly the diagonal and the immediate first off-diagonal in the upper-triangular portion.\ The proof of Theorem 2 shows the upper off-diagonal can consist of 0's, 1's, $\ee$'s, and $\eee$'s.\ Also, $A_1$ and $A_2$ are diagonalizable if and only if $J$ is diagonal.\  This paper will soon show an immediate application of the Bicomplex Jordan form: it helps determine the invariant subspaces for any bicomplex matrix. To clarify, when $\Bi^n$ is the collection of $n$-dimensional column vectors with bicomplex entries, a subspace $S \subseteq \Bi^n$ is invariant under an $n \times n$ matrix $A$ when $A\vec{\z} \in S$ for every $\vec{\z} \in S$. The following fact, the second main result of the section, results as a direct consequence of the Fundamental Theorem of Bicomplex Matrices.

\begin{corollary} Writing any vector $\vec{\z} \in \Bi^n$ as $\vec{\z}=\vec{\z}_1\ee+\vec{\z}_2\eee$, where $\vec{\z}_1,\vec{\z}_2 \in \C^n$, decompose any given subspace $S \subseteq \Bi^n$ as $S = S_1\ee \oplus S_2\eee$, where $S_1$ and $S_2$ are subspaces of $\C^n$. Then $S$ is invariant under any given $n \times n$ bicomplex matrix $A=A_1\ee + A_2\eee$ exactly when $S_1$ is invariant under $A_1$ and $S_2$ is invariant under $A_2$. 
\end{corollary}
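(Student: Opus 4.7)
The plan is to leverage both the Fundamental Theorem for Bicomplex Matrices and the fact that $\ee$ and $\eee$ are themselves bicomplex scalars that act as idempotent projections on $\Bi^n$. First, I would make the decomposition $S = S_1\ee \oplus S_2\eee$ precise by setting $S_1 := \{\vec{v}_1 \in \C^n : \vec{v}_1\ee \in S\}$ and $S_2 := \{\vec{v}_2 \in \C^n : \vec{v}_2\eee \in S\}$. The decisive observation is that for any $\vec{\z} = \vec{\z}_1\ee + \vec{\z}_2\eee$ in the bicomplex subspace $S$, scaling by the bicomplex scalar $\ee$ yields $\ee\vec{\z} = \vec{\z}_1\ee$ (since $\ee^2 = \ee$ and $\ee\eee = 0$), so $\vec{\z}_1\ee \in S$; symmetrically $\vec{\z}_2\eee \in S$. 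This simultaneously confirms that $S_1$ and $S_2$ are complex subspaces of $\C^n$, that every element of $S$ splits as $\vec{v}_1\ee + \vec{v}_2\eee$ with $\vec{v}_i \in S_i$, and that any such sum lies in $S$ by closure under bicomplex addition.

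With the decomposition in hand, I would invoke the Fundamental Theorem to compute, for any $\vec{\z} = \vec{\z}_1\ee + \vec{\z}_2\eee \in \Bi^n$, the action $A\vec{\z} = A_1\vec{\z}_1\ee + A_2\vec{\z}_2\eee$. The $(\Leftarrow)$ direction is then immediate: if $S_1$ is $A_1$-invariant and $S_2$ is $A_2$-invariant, then for $\vec{\z} \in S$ the image $A_1\vec{\z}_1\ee + A_2\vec{\z}_2\eee$ lies in $S_1\ee \oplus S_2\eee = S$. For the $(\Rightarrow)$ direction, given any $\vec{v}_1 \in S_1$, the vector $\vec{v}_1\ee$ itself lies in $S$ by definition of $S_1$, so the assumed $A$-invariance of $S$ yields $A(\vec{v}_1\ee) = A_1\vec{v}_1\ee \in S$; this forces $A_1\vec{v}_1 \in S_1$, and an identical argument with $\eee$ in place of $\ee$ handles $A_2$ and $S_2$.

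The main obstacle is not computational but foundational: one must verify that the purported decomposition $S = S_1\ee \oplus S_2\eee$ actually makes sense and exhausts $S$, rather than merely describing the set of component vectors of elements of $S$. Once the idempotent projection trick (multiplying by $\ee$ or $\eee$, which lie in $\Bi$ and therefore preserve $S$) shows that the split is genuine, both implications of the biconditional collapse into one-line applications of the Fundamental Theorem.
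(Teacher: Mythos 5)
Your proof is correct and takes essentially the same route as the paper's: apply the Fundamental Theorem to compute $A\vec{\z}=A_1\vec{\z}_1\ee+A_2\vec{\z}_2\eee$ and match idempotent components against $S=S_1\ee \oplus S_2\eee$. The only difference is that you additionally verify that the decomposition of $S$ is genuine (by multiplying by the idempotents $\ee$ and $\eee$) and make the $(\Rightarrow)$ direction explicit using vectors of the form $\vec{v}_1\ee \in S$, details the paper's proof simply takes for granted.
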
 

\begin{proof} The subspace $S$ is invariant under   $A $ exactly when \centre$A\vec{\z}=A_1\vec{\z}_1\ee+A_2\vec{\z}_2\eee \in S_1\ee \oplus S_2\eee$ for every $\vec{\z} = \vec{\z}_1\ee+\vec{\z}_2\eee \in S$.\stopcentre  But that fact is equivalent to both \centre$A_1\vec{\z}_1 \in S_1$ for every $\vec{\z}_1 \in S_1$,  and $A_2 \vec{\z}_2 \in S_2$ for every $\vec{\z}_2 \in S_2$,\stopcentre which means exactly that $S_1$ is invariant under $A_1$ and $S_2$ is invariant under $A_2$. 
\end{proof}

Corollary 4 is extremely useful, especially since a constructive method to determine the invariant subspace lattice diagram for any complex $n \times n$ matrix is known. See \cite{CJW}.\ From Corollary 4, this constructive method extends to find the invariant subspace lattice diagram for any bicomplex $n \times n$ matrix $A$: it says simply to form each possible invariant subspace $S_1\ee \oplus S_2\eee$, where $S_1$ is invariant for $A_1$ and $S_2$ is invariant for $A_2$, and then to realize each of these invariant subspaces for $A$ ``locks into place naturally'' into the corresponding lattice diagram. Example 2 illustrates that process for a simple case, below. \\

An analysis that is similar to the development of the Jordan form also applies to other representations of bicomplex matrices, for example, to form a diagonalization of a  self-adjoint bicomplex matrix. Begin with the following definitions.

\begin{definition}
The adjoint of a bicomplex matrix $A=[a_{ij}]$ is its conjugate-transpose $A^*=[\overline{a}_{ji}]$. 
\end{definition}

Note this definition generalizes the adjoint of a complex matrix. Furthermore, if $\vec{\z}, \vec{\omega} \in  \Bi^n$,  we define as in, for example, \cite{nDimensional}, a bicomplex inner product as \centre
$\langle \vec{\z}, \vec{\omega} \rangle=\sum\limits_{i=1}^n \z_i\overline{\omega}_i = \sum\limits_{i=1}^n (\z_{i1}\overline{\omega}_{i1}\ee+\z_{i2}\overline{\omega}_{i2}\eee )= \langle \vec{\z}_1, \vec{\omega}_1 \rangle\ee+\langle \vec{\z}_2, \vec{\omega}_2 \rangle\eee$.\stopcentre The following theorem results.

\begin{theorem} For any $n \times n$ bicomplex matrix $A$ and $\vec{\z},\vec{\omega} \in \Bi^n$, \centre $\langle A\vec{\z}, \vec{\omega} \rangle=\langle \vec{\z}, A^*\vec{\omega} \rangle$.\stopcentre
\end{theorem}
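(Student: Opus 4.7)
The plan is to reduce the identity to the known complex adjoint relation on each idempotent component, leveraging the Fundamental Theorem for Bicomplex Matrices together with the componentwise formula for the bicomplex inner product already displayed in the excerpt.

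The first step is to establish the key lemma that the adjoint respects the idempotent decomposition: if $A=A_1\ee+A_2\eee$, then $A^*=A_1^*\ee+A_2^*\eee$. To see this, write $A=[a_{ij}]$ with $a_{ij}=\hat{a}_{ij}\ee+\tilde{a}_{ij}\eee$, so that $A_1=[\hat{a}_{ij}]$ and $A_2=[\tilde{a}_{ij}]$. By Definition 2 the entrywise bicomplex conjugate is $\overline{a}_{ji}=\overline{\hat{a}_{ji}}\,\ee+\overline{\tilde{a}_{ji}}\,\eee$, and transposition only permutes entries without touching the idempotent components, so $A^*=[\overline{a}_{ji}]=[\overline{\hat{a}_{ji}}]\ee+[\overline{\tilde{a}_{ji}}]\eee=A_1^*\ee+A_2^*\eee$.

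With that in hand, the second step applies the Fundamental Theorem for Bicomplex Matrices to obtain
\[
A\vec{\z}=A_1\vec{\z}_1\ee+A_2\vec{\z}_2\eee, \qquad A^*\vec{\omega}=A_1^*\vec{\omega}_1\ee+A_2^*\vec{\omega}_2\eee.
\]
The third step plugs these into the bicomplex inner product formula displayed just before the theorem, yielding
\[
\langle A\vec{\z},\vec{\omega}\rangle=\langle A_1\vec{\z}_1,\vec{\omega}_1\rangle\ee+\langle A_2\vec{\z}_2,\vec{\omega}_2\rangle\eee,
\]
\[
\langle \vec{\z},A^*\vec{\omega}\rangle=\langle \vec{\z}_1,A_1^*\vec{\omega}_1\rangle\ee+\langle \vec{\z}_2,A_2^*\vec{\omega}_2\rangle\eee.
\]
The final step invokes the classical complex adjoint identity $\langle A_k\vec{\z}_k,\vec{\omega}_k\rangle=\langle \vec{\z}_k,A_k^*\vec{\omega}_k\rangle$ for $k=1,2$, after which the two idempotent-component expansions match term by term.

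The only nonroutine piece is the initial lemma that $A^*$ decomposes as $A_1^*\ee+A_2^*\eee$; once that is verified, everything else is bookkeeping that rides on the Fundamental Theorem. I expect no genuine obstacle, since Definition 2 was precisely engineered so that bicomplex conjugation acts componentwise in the idempotent representation.
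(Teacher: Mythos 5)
Your proposal is correct and follows essentially the same route as the paper: decompose via the Fundamental Theorem, expand the bicomplex inner product idempotent-componentwise, and invoke the complex adjoint identity on each component. The only difference is that you explicitly verify $A^*=A_1^*\ee+A_2^*\eee$, a fact the paper's proof uses implicitly in its final equality, so your write-up is if anything slightly more complete.
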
 

\begin{proof} By Theorem 1, the definition of the bicomplex inner product, and the adjoint properties of complex matrices, \\ \indent  $\langle A\vec{\z}, \vec{\omega} \rangle=\langle A_1\vec{\z}_1\ee+ A_2\vec{\z}_2\eee, \vec{\omega}_1\ee+\vec{\omega}_2\eee \rangle=\langle A_1\vec{\z}_1, \vec{\omega}_1\rangle\ee+ \langle A_2\vec{\z}_2, \vec{\omega}_2\rangle\eee  \\ \indent  \hspace*{37.5pt} = \langle \vec{\z}_1, A_1^*\vec{\omega}_1\rangle \ee+\langle \vec{\z}_2, A_2^*\vec{\omega}_2\rangle\eee   =\langle \vec{\z}, A^*\vec{\omega} \rangle$
\end{proof}

Similar to the complex situation, a bicomplex matrix is self-adjoint when $A=A^*$. A bicomplex unitary matrix $U$ satisfies $U^{-1}=U^*$.\ We note, as in \cite{nDimensional}, the following well-known result. 

\begin{theorem} {\bf The Spectral Theorem for Self-adjoint Matrices:} Any self-adjoint bicomplex matrix is unitarily diagonalizable, in the sense that \centre $A = PDP^{-1}$ \stopcentre  for a (bicomplex) diagonal matrix $D$ and a unitary matrix $P$. 
\end{theorem}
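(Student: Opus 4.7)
The plan is to reduce the bicomplex spectral theorem to the classical complex spectral theorem via the idempotent decomposition, mirroring the strategy already used for the Jordan form in Theorem 2. Writing $A=A_1\ee+A_2\eee$, the first task is to confirm that the adjoint operation commutes with the idempotent decomposition. Because Definition 2 gives $\overline{\z}=\overline{\z}_1\ee+\overline{\z}_2\eee$, applying conjugate-transpose entry by entry yields $A^{*}=A_1^{*}\ee+A_2^{*}\eee$, where each $A_k^{*}$ is the ordinary complex adjoint. The hypothesis $A=A^{*}$ then forces $A_1=A_1^{*}$ and $A_2=A_2^{*}$, so each idempotent component is a complex self-adjoint matrix.

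Second, I would invoke the classical complex spectral theorem on each $A_k$ to obtain complex unitary matrices $P_1, P_2$ and complex diagonal matrices $D_1, D_2$ with $A_k=P_kD_kP_k^{-1}=P_kD_kP_k^{*}$. I would then set
\[
P=P_1\ee+P_2\eee, \qquad D=D_1\ee+D_2\eee.
\]
The matrix $D$ is diagonal in the bicomplex sense (every off-diagonal entry is a sum of two zero complex components, hence zero as a bicomplex scalar), and Theorem 1 together with Corollary 1 gives
\[
PDP^{-1}=P_1D_1P_1^{-1}\ee+P_2D_2P_2^{-1}\eee=A_1\ee+A_2\eee=A.
\]

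Finally, I would verify that $P$ is bicomplex unitary. Combining the componentwise formula for the adjoint just derived with the componentwise formula for the inverse in Corollary 1 gives
\[
P^{*}=P_1^{*}\ee+P_2^{*}\eee=P_1^{-1}\ee+P_2^{-1}\eee=P^{-1},
\]
which is the definition of unitarity. The only step requiring genuine care is the opening observation that $A^{*}=A_1^{*}\ee+A_2^{*}\eee$, since this is where the specific choice of bicomplex conjugate (Definition 2) enters and makes the whole decoupling possible; once that is in hand, the rest of the proof is a routine application of Theorem 1 and the complex spectral theorem in each idempotent slot.
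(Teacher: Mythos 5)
Your proposal is correct and follows essentially the same route as the paper: decompose $A$ idempotent-componentwise, observe that $A_1$ and $A_2$ are complex self-adjoint, apply the classical spectral theorem to each, and recombine via Theorem 1, verifying $P^{*}=P^{-1}$ componentwise. Your explicit justification that $A^{*}=A_1^{*}\ee+A_2^{*}\eee$ via Definition 2 is a nice touch that the paper leaves implicit, but it is not a different argument.
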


\begin{proof} When $ A=A_1\ee+ A_2\eee$, by the definition of a self-adjoint bicomplex matrix, both $A_1$ and $A_2$ are complex self-adjoint, and then the spectral theorem implies\\ $A_1=P_1D_1P_1^{-1}$ and $A_2=P_2D_2P_2^{-1}$. Then \vspace*{5pt} \\ \indent \hspace*{17pt} $A=A_1\ee+ A_2\eee= P_1D_1P_1^{-1}\ee + P_2D_2P_2^{-1}\eee\\ \hspace*{40pt} =(P_1\ee+P_2\eee)(D_1P_1^{-1}\ee + D_2P_2^{-1}\eee)\\ \indent \hspace*{28 pt} =(P_1\ee+P_2\eee)(D_1\ee+D_2\eee)(P_1^{-1}\ee + P_2^{-1}\eee)=PDP^{-1}$, \vspace*{5pt} \\  with each matrix having the properties required in the theorem statement.\ For example, $P$ may be chosen unitary because $P_1$ and $P_2$ may be chosen as (complex) unitary, and then \centre$P^*=P_1^*\ee+P_2^*\eee=P_1^{-1}\ee+P_2^{-1}\eee=P^{-1}$.\vspace{-19pt}\stopcentre
\end{proof}

\noindent \section{Examples}

\noindent {\bf Example 1:} Let $A=\left[\begin{matrix} 0 & \z \\ \overline{\z} & 0 \end{matrix}\right]=\left[\begin{matrix} 0 & \z_1 \\ \overline{\z_1} & 0 \end{matrix}\right]\ee +\left[\begin{matrix} 0 & \z_2 \\ \overline{\z_2} & 0 \end{matrix}\right]\eee$ with $\z \ne 0$. Then $\det (A- \lambda I) =0$ implies $\lambda_1^2 -|\z_1|^2 =0$, or $\lambda_1=\pm|\z_1|$,  and  similarly $\lambda_2=\pm|\z_2|$. Therefore, $A$ can be analyzed in terms of the pair of bicomplex eigenvalues $\lambda = |\z_1|\ee+|\z_2|\eee, -|\z_1|\ee-|\z_2|\eee$, more concisely written as $\lambda = |\z |_{_{\h}}, - |\z |_{_{\h}}$. The corresponding eigenvectors (which come as the nonzero solutions to $A\vec{\z}=\lambda \vec{\z}$) are
\centre$\left[ \begin{matrix} |\z |_{_{\h}} \\ \overline{\z} \end{matrix} \right]$ and $\left[ \begin{matrix} -|\z |_{_{\h}} \\ \overline{\z} \end{matrix} \right]$, respectively.\stopcentre  The Spectral Theorem for Self-adjoint Matrices then implies 
$A = PDP^{-1}$, where \centre   $P= \left[ \begin{matrix} |\z |_{_{\h}} &   -|\z |_{_{\h}}  \\ \overline{\z} & \overline{\z}  \end{matrix} \right]$  and $D=\left[ \begin{matrix} |\z |_{_{\h}} & 0 \\ 0 &   -|\z |_{_{\h}} \end{matrix} \right]$.  \stopcentre An important note is that this is the standard complex Jordan form for $A$ when $\z \in \mathbb{C}$. 

But even when $A$ is complex, these bicomplex diagonalizations are generally not unique (nor even ``essentially'' unique). In this example, a second diagonalization results, realizing $\det (A- \lambda I) =0$ also for the pair of eigenvalues $\lambda = |\z_1|\ee-|\z_2|\eee, -|\z_1|\ee+|\z_2|\eee$. The corresponding (nonzero) eigenvectors are 
\centre$\left[ \begin{matrix} |\z |_1\ee - |\z_2 |\eee \\ \overline{\z} \end{matrix} \right]$ and $\left[ \begin{matrix} -|\z |_1\ee + |\z_2 |\eee\\ \overline{\z} \end{matrix} \right]$, respectively.\stopcentre A second diagonalization provided by the Spectral Theorem for Self-adjoint Matrices results, even when $A$ is complex. We get $A = PDP^{-1}$, where \centre   $P= \left[ \begin{matrix} |\z |_1\ee - |\z_2 |\eee &   -|\z |_1\ee + |\z_2 |\eee \\ \overline{\z} & \overline{\z}  \end{matrix} \right]$  and $D=\left[ \begin{matrix} |\z_1|\ee-|\z_2|\eee & 0 \\ 0 &   -|\z_1|\ee+|\z_2|\eee  \end{matrix} \right]$.  \stopcentre 
In general, an $n \times n$ self-adjoint matrix $A=A_1\ee+A_2\eee$ will have $n!$ different bicomplex diagonalizations when $A_1$ and $A_2$ have $n$ distinct eigenvalues, since these eigenvalues can be paired in $n!$ ways. 
 
\bigskip

\noindent {\bf Example 2:} {\bf An Illustration of the Invariant Subspace Lattice Diagram for Bicomplex Matrices.} Let $A= \left[\begin{matrix} 0 & 0 \\ 0 & 0 \end{matrix}\right]\ee +\left[\begin{matrix} 0 & 1 \\ 0 & 0 \end{matrix}\right]\eee$. This matrix $A$ is a bicomplex Jordan form matrix as Theorem 2 describes. As developed in \cite{CJW}, the invariant subspace lattice diagram for a complex matrix can be determined in a fairly simple manner from a complex matrix's Jordan form structure. For example, in the simple case for a matrix with invariant subspaces that are all marked\footnote{An invariant subspace $\mathcal{N}$ is ``marked'' when there exists a Jordan basis for $\mathcal{N}$ that can be extended (by adjoining new vectors) to form a Jordan basis for the entire space $\mathbb{C}^n$  (see \cite[p. 210]{Rodman}).} and whose Jordan form has a single eigenvalue (see \cite[Theorem A1]{CJW}), the invariant subspace lattice diagram is found by forming diagonal block matrices $Z$ with the same block structure as the Jordan form $J$ but with exponential powers of the backward shift matrix forming the diagonal blocks. Choosing a set of exponential powers corresponds to a particular invariant subspace (in a pairwise manner).  Creating a lattice diagram with all the different $Z$'s formed in this manner thereby creates a lattice diagram equivalent to the invariant subspace's lattice diagram. 

Corollary 4 implies the invariant subspace lattice diagram for a bicomplex matrices $A=A_1\ee+ A_2\eee$ can be found from combining the invariant subspace lattices, applying the techniques in \cite{CJW} on the complex matrices $A_1$ and $A_2$. In this example, $A_1=\left[\begin{matrix} 0 & 0 \\ 0 & 0 \end{matrix}\right]$ has two Jordan blocks with eigenvalues 0, and $A_2= \left[\begin{matrix} 0 & 1 \\ 0 & 0 \end{matrix}\right]$ has a single Jordan block with eigenvalues 0. Their two invariant subspace lattice diagrams, found using the techniques in \cite{CJW}, are in Figure 1.
\begin{figure}[h] 
\centering 
  \includegraphics[bb=0 0 701 252,width=5.14in,height=1.85in,keepaspectratio]{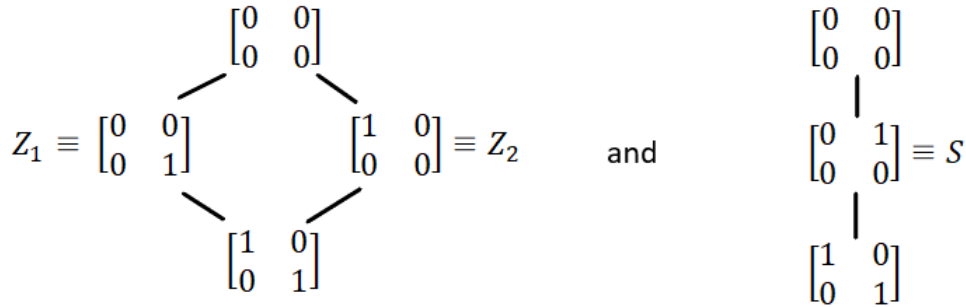}
    \caption{Invariant subspace lattices for complex matrices $A_1$ (on left) and $A_2$ (on right)}
  \label{fig:Figure1}
\end{figure}

Combining the invariant subspaces from each idempotent portion creates an invariant subspace lattice diagram for the matrix $A$. Our diagram uses a notational convenience: the symbol $[Z_1 | S]$, for example, means that the invariant subspace for $A $ is formed from the invariant subspaces that come from each matrix in the idempotent component.\ Here, since  $Z_1$ corresponds to the invariant subspace span$\{\vec{e}_1\}$, where $\vec{e}_1$ is the eigenvector that corresponds to the upper left Jordan block of $A_1$ (see \cite{CJW} for this formulation), and $S$ corresponds to the invariant subspace span$\{\vec{e}_2\}$, where $\vec{e}_2$ is the eigenvector for the single Jordan block of $A_2$, the symbol $[Z_1 | S]$ corresponds to the invariant subspace span$\{\vec{e}_1\}\ee+\mbox{span}\{\vec{e}_2\}\eee$ of $A$. With this symbolism in mind, the invariant subspace lattice diagram for $A$ is formed from all such combinations as follows in Figure 2. 
\begin{figure}[h] 
  \centering
  \includegraphics[width=5.14in,height=2.01in,keepaspectratio]{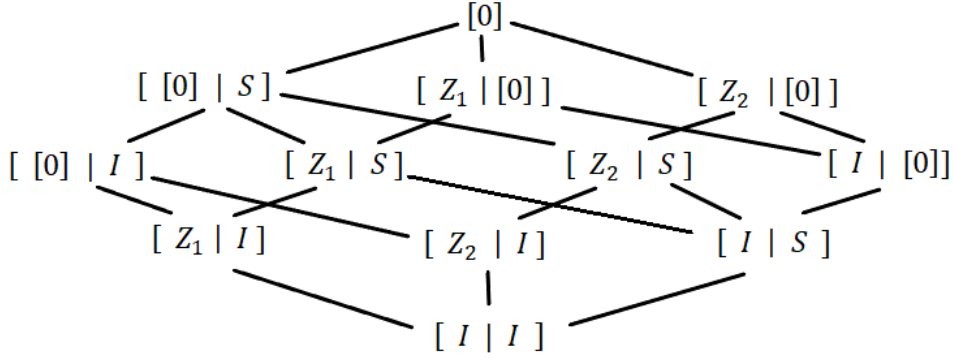}
  \caption{Invariant Subspace Lattice for the bicomplex matrix $A$}
  \label{fig:Figure2}
\end{figure}

The same methodology, using the techniques of \cite{CJW} on each idempotent component of any $n \times n$ bicomplex matrix  $A=A_1\ee+A_2\eee$, routinely develops the bicomplex matrix's complete invariant subspace lattice diagram. 

\section{Bicomplex Compact Operators on Banach Spaces}

This section shows how the theory of operators on complex Banach spaces extends, via an operator's idempotent representation, to a corresponding theory of so-called bicomplex compact operators on bicomplex Banach spaces.

\begin{definition} A bicomplex Hilbert space is of the form $\mathcal{H}=\mathcal{H}_1\ee+\mathcal{H}_2\eee$, where $\mathcal{H}_1$ and $\mathcal{H}_2$ are complex Hilbert spaces with the same dimension.\ The bicomplex inner product on any elements $\vec{\z}, \vec{\omega} \in \mathcal{H}$   is defined via complex inner products on the idempotent components of $\vec{\z}$ and $\vec{\omega}$ as \centre $\langle \vec{\z}, \vec{\omega} \rangle= \langle \vec{\z}_1, \vec{\omega}_1 \rangle\ee+\langle \vec{\z}_2, \vec{\omega}_2 \rangle\eee$ \stopcentre A set of bicomplex vectors $\{ \vec{\z}_k \}$ is orthogonal in $\mathcal{H}$ when $\langle \vec{\z}_n, \vec{\z}_m \rangle=0$ for $n \ne m$ and orthonormal when in addition the hyperbolic norm of each vector in the set, defined via idempotent component norms for $\mathcal{H}_1$ and $\mathcal{H}_2$ according to  $\| \vec{\z}_n \|_{_\h} = \| \vec{\z}_{n,1} \|\ee + \| \vec{\z}_{n,2} \|\eee$, equals 1.

Similarly, a bicomplex Banach space is of the form $\mathcal{B}=\mathcal{B}_1\ee+\mathcal{B}_2\eee$, where $\mathcal{B}_1$ and $\mathcal{B}_2$ are complex Banach spaces with the same dimension. 
\end{definition}

Such spaces are often called ``$\Bi$-modules'' in the literature (see, for example, \cite{KumarSingh}), where they have been studied in detail.\ Definition 5 appropriately names such a space as a Hilbert and/or Banach space, as the idempotent arrangement is uniquely structured to produce broad general theories for linear operators on such spaces. This and the next section develops such theory for bicomplex compact operators, defined here:

\begin{definition} An operator $K:\ \mathcal{B} \to \mathcal{B}$ mapping a bicomplex Banach space $\mathcal{B}$ to itself is compact if for every bounded set $S \subseteq \mathcal{B}$, the set $K(S)$ is relatively compact in $\mathcal{B}$, which means that every infinite subset of $K(S) \equiv \{ K\vec{s}: \vec{s}\in S \}$ has at least one limit point in its (hyperbolic) norm closure.\ This is equivalent to every bounded sequence $\{x_n\}$ of vectors in $\mathcal{B}$ producing the sequence $\{Kx_n\}$ with a hyperbolic norm-convergent subsequence. 
\end{definition}

Note importantly that hyperbolic norm convergence in $\mathcal{B}$ happens exactly when norm convergence of the idempotent components happen in both complex spaces $\mathcal{B}_1$ and $\mathcal{B}_2$.\ In such a way, the actions of any operator in $\mathcal{B}$ follows from actions of the corresponding operator elements acting on $\mathcal{B}_1$ and $\mathcal{B}_2$.\ Here, $T$ acts on $v=v_1\ee+v_2\eee \in \mathcal{B}$ according to $Tv=T_1v_1\ee+T_2v_2\eee$.\ The following lemma notes the impact of those facts on compact operators.

\begin{lemma} For any operator $T=T_1\ee+T_2\eee$  on a bicomplex Banach space $\mathcal{B}=\mathcal{B}_1\ee+\mathcal{B}_2\eee$, the operators $T_1$ and $T_2$ are compact on $\mathcal{B}_1$ and $\mathcal{B}_2$ respectively iff $T$ is compact.
\end{lemma}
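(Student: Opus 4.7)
The plan is to prove both directions of the equivalence by reducing everything to the complex case via idempotent components, using the fact noted just above the lemma that hyperbolic norm convergence of a sequence in $\B$ is equivalent to simultaneous norm convergence of its two idempotent components in $\B_1$ and $\B_2$. I will also need the parallel observation about boundedness: a subset $S \subseteq \B$ is bounded in hyperbolic norm (meaning there is some $R = r_1\ee + r_2\eee \in \h^+$ with $\| \vec{s} \|_{_\h} <_{_\h} R$ for every $\vec{s} \in S$) exactly when the two component sets $S_1 \subseteq \B_1$ and $S_2 \subseteq \B_2$ are each bounded in the usual complex-scalar sense, since $\| \vec{s} \|_{_\h} = \| \vec{s}_1 \| \ee + \| \vec{s}_2 \| \eee$.

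For the forward direction, assume $T$ is bicomplex compact and let $\{x_n\}$ be a bounded sequence in $\B_1$. The sequence $\vec{v}_n = x_n\ee + 0\cdot\eee$ is bounded in hyperbolic norm in $\B$, so by compactness of $T$ the sequence $T\vec{v}_n = (T_1 x_n)\ee + 0\cdot\eee$ has a hyperbolic-norm-convergent subsequence; reading off its $\ee$-component gives a norm-convergent subsequence of $\{T_1 x_n\}$ in $\B_1$. Thus $T_1$ is compact, and the identical argument with the roles of the two idempotent components swapped shows $T_2$ is compact.

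For the reverse direction, assume $T_1$ and $T_2$ are each complex compact and let $\{\vec{v}_n\} = \{v_{n,1}\ee + v_{n,2}\eee\}$ be a bounded sequence in $\B$. Then $\{v_{n,1}\}$ and $\{v_{n,2}\}$ are each bounded in their respective complex Banach spaces. Compactness of $T_1$ lets me extract a subsequence along which $\{T_1 v_{n,1}\}$ converges in $\B_1$; applying compactness of $T_2$ to that subsequence extracts a further subsequence along which $\{T_2 v_{n,2}\}$ also converges in $\B_2$. Along this doubly-extracted subsequence, $T\vec{v}_n = (T_1 v_{n,1})\ee + (T_2 v_{n,2})\eee$ has convergent idempotent components, and so by the opening observation it is hyperbolic-norm convergent. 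Hence $T$ is bicomplex compact.

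The main obstacle is not really a technical one — it is simply to state and use the componentwise characterizations of hyperbolic boundedness and hyperbolic convergence cleanly. Once those are in hand, the proof is a routine two-step subsequence-extraction argument, with the only mild subtlety being the sequential nature of the extraction (first for $T_1$, then passing to a further subsequence for $T_2$) in the reverse direction.
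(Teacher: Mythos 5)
Your proposal is correct and follows essentially the same route as the paper's proof: reduce everything to the idempotent components, using the equivalence of hyperbolic-norm convergence (and boundedness) with componentwise convergence (and boundedness) in $\mathcal{B}_1$ and $\mathcal{B}_2$. If anything, your two-step subsequence extraction in the direction ``$T_1,T_2$ compact $\Rightarrow$ $T$ compact'' is spelled out more carefully than the paper's limit-point sketch, but the underlying idea is identical.
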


\begin{proof} For $i=1,2$, the compactness of $T_i$ implies for every bounded set $S_i \subseteq \mathcal{B}_i$, the set $T_i(S_i)$ is relatively compact in $\mathcal{B}_i$; i.e., every infinite subset of $T_i(S_i)$ has at least one limit point $x_i$ in its (complex Banach space) norm closure.\ But then every hyperbolic bounded set $S=S_1\ee + S_2\eee$, which must have $S_1$ and $S_2$ bounded in the corresponding complex Banach space norm, has at least one limit point $x=x_1\ee+x_2\eee$ in its (hyperbolic) norm closure. Hence $T$ is compact by definition.

Conversely, assume $T=T_1\ee+T_2\eee$ is compact and take any bounded sequences $\{x_{1,n}\}\in  \mathcal{B}_1$ and $\{x_{2,n}\} \in  \mathcal{B}_2$. Then $\{x_n\}=\{x_{1,n}\ee+x_{2,n}\eee\}$ is a bounded set of vectors in $\mathcal{B}$ (in the hyperbolic norm sense),  producing the sequence $\{Tx_n\}$ with a hyperbolic norm-convergent subsequence $\{Tx_{{n}_k}\equiv T_1x_{{1,n}_k}\ee+T_2x_{{2,n}_k}\eee \}$.\ But the hyperbolic norm structure then implies that each idempotent component $T_1x_{{1,n}_k}$ and $T_2x_{{2,n}_k}$ is norm-convergent in $\mathcal{B}_1$ and $\mathcal{B}_2$, respectively.\ Hence $T_1$ and $T_2$ each satisfy the definition of a compact operator.
\end{proof}

The (hyperbolic) operator norm of any (bicomplex) bounded operator $T$ is naturally defined as $\|T\|_{_\h}=\|T_1\|\ee+\|T_2\|\eee$.\   Also, for example, it is easy to see that the identity operator $I$ is not compact on  any infinite-dimensional bicomplex Hilbert space $\mathcal{H}$.\ For we can choose an infinite sequence $x_n$ of $\mathcal{H}$ to be the orthonormal set of basis elements for the Hilbert space, which are each distance (in hyperbolic norm) 1 apart. Then $I(x_n)=x_n$ has no convergent subsequence.\ A similar result is seen true for the identity operator on any infinite-dimensional bicomplex Banach space by applying Lemma 2 below. 

The spectrum of a bicomplex operator is defined similarly to the spectrum of a complex operator.

\begin{definition}
For a bounded operator $T$:
\begin{enumerate} 
  \item The spectrum of $T$ is $\sigma(T)= \{ \zeta \in \Bi\  \colon  \zeta I - T \mbox{ is not invertible } \}$.
  \item The point spectrum of $T$ is the set of eigenvalues $\sigma_p(T)$; in other words, the set of $\lambda \in \Bi$ that satisfy \centre $(\lambda I - T)v=0$ \stopcentre for some $v=v_1\ee+v_2\eee \in \mathcal{B}$ with both $v_1$ and $v_2$ nonzero, where $v$ is called an eigenvector for $\lambda$. Any eigenvalue $\lambda=\lambda_1\ee+\lambda_2\eee$ produces for $T=T_1\ee+T_2\eee$ the eigenvalue $\lambda_1$ for $T_1$ and $\lambda_2$ for $T_2$ on the associated spaces $\mathcal{B}_1$ and $\mathcal{B}_2$, respectively, with associated eigenvectors $v_1$ and $v_2$, respectively.
\end{enumerate}
\end{definition}

A spectral theory for bicomplex compact operators holds on bicomplex Banach spaces.\ The following theorem describes that fact, and is well known in the case of compact operators on complex Banach spaces (see, for example 
\cite[pp. 377--378]{Johnston}). 

\begin{theorem}  For a bicomplex compact operator $K:\mathcal{B} \to \mathcal{B}$ with $\mathcal{B}$ a bicomplex Banach space,
\begin{enumerate} 
\item Every invertible $\lambda \in \sigma(K)$ is an eigenvalue of $K$.

\item The eigenvalues can only accumulate at noninvertible values.\ If the dimension of $\mathcal{B}$ is not finite, then $\sigma(K)$ must contain 0, even if 0 is not an eigenvalue.

\item $\sigma(K)$ is countable; in other words, its elements can be listed as either a finite list or as $\sigma(K)=\{\lambda_1,\lambda_2,\ldots \}$.

\end{enumerate} 
\end{theorem}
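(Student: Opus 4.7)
The plan is to reduce each of the three assertions to the classical Riesz--Schauder spectral theory for compact operators on a complex Banach space, applied separately to each idempotent component $K_1:\mathcal{B}_1\to\mathcal{B}_1$ and $K_2:\mathcal{B}_2\to\mathcal{B}_2$ of $K=K_1\ee+K_2\eee$. By Lemma~1 both $K_i$ are complex compact operators, so the classical theorem supplies for each of them: every nonzero spectral value is an eigenvalue, the eigenvalues accumulate only at $0$, $\sigma(K_i)$ is countable, and $0\in\sigma(K_i)$ whenever $\mathcal{B}_i$ is infinite-dimensional.

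My first step would be to promote Corollary~1 from bicomplex matrices to bounded bicomplex operators by the same algebra, so that $\zeta I-K=(\zeta_1 I-K_1)\ee+(\zeta_2 I-K_2)\eee$ is bicomplex-invertible iff each complex operator $\zeta_i I-K_i$ is invertible. This yields the spectral dictionary that $\zeta\in\sigma(K)$ iff $\zeta_1\in\sigma(K_1)$ or $\zeta_2\in\sigma(K_2)$, while $\zeta$ is bicomplex-invertible precisely when both $\zeta_1,\zeta_2\neq 0$. With this translation in hand, part~(2) follows because hyperbolic-norm convergence of eigenvalues $\lambda^{(n)}\to\lambda^{\ast}$ forces componentwise complex convergence $\lambda_i^{(n)}\to\lambda_i^{\ast}$, and classical accumulation of compact-operator eigenvalues at $0$ forces at least one of $\lambda_1^{\ast},\lambda_2^{\ast}$ to vanish, making $\lambda^{\ast}$ noninvertible. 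The infinite-dimensional addendum is immediate: at least one $\mathcal{B}_i$ is infinite-dimensional, so $0\in\sigma(K_i)\subseteq\sigma(K)$. Part~(3) reduces to the countability of the pair $(\sigma(K_1),\sigma(K_2))$, each classically countable.

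For part~(1), I would take $\lambda=\lambda_1\ee+\lambda_2\eee\in\sigma(K)$ bicomplex-invertible, so $\lambda_1,\lambda_2\neq 0$ and at least one $\lambda_i$ lies in $\sigma(K_i)$. The Riesz--Schauder theorem applied to $K_i$ provides a nonzero eigenvector $v_i\in\mathcal{B}_i$ for $\lambda_i$; when the other component also belongs to its spectrum, the same argument supplies $v_{3-i}$, and the combined vector $v=v_1\ee+v_2\eee$ satisfies $(\lambda I-K)v=0$ as required.

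The main obstacle I anticipate is a subtle asymmetry in the definitions. The bicomplex spectrum requires only \emph{one} component of $\zeta I-K$ to be noninvertible, yet a bicomplex eigenvector in Definition~7 must have \emph{both} idempotent components nonzero, and the raw set-theoretic $\sigma(K)$ contains entire affine slices $\sigma(K_i)\ee+\mathbb{C}\eee$, which is not countable. Both difficulties are resolved by adopting the interpretive convention indicated in the footnote following Corollary~3: when only one idempotent component of $\lambda$ lies in the corresponding complex spectrum, the eigenvalue problem reduces to a genuine problem on that single component. Under this convention parts~(1) and~(3) go through verbatim as componentwise restatements of the classical theory, and making the convention explicit is the only nonmechanical step.
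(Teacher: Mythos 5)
Your proposal is correct and takes essentially the same route as the paper, which simply decomposes $K=K_1\ee+K_2\eee$ and invokes the classical Riesz--Schauder theory for the compact operators $K_1,K_2$ on the complex Banach spaces $\mathcal{B}_1,\mathcal{B}_2$. In fact you go further than the paper's one-line argument by flagging the genuine definitional subtlety that parts (1) and (3) require reading $\sigma(K)$ componentwise as $\sigma(K_1)\ee+\sigma(K_2)\eee$ rather than via the raw ``not invertible'' definition, a point the paper leaves implicit.
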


The theorem's proof follows from the fact that $K$ on $\mathcal{B}$ decomposes into $K=K_1\ee+K_2\eee$, where $K_i$ is compact on $\mathcal{B}_i$, $i=1,2$.\  Applying the fact that the theorem is well known and true on complex Banach spaces, the proof of each part follows immediately. 

Furthermore, the next two lemmas, which are historically important and true for operators on complex Banach spaces, are true for bicomplex Banach spaces as well.\ The first uses the concept of an infimum of a set of hyperbolic numbers $\eta=\eta_1\ee+\eta_2\eee$ for which $\eta_1, \eta_2 \in \mathbb{R}$, defined as $\inf \{ \eta \}=\inf \{ \eta_1\} \ee + \inf \{ \eta_2 \} \eee$. \bigskip

\begin{lemma} {\bf Riesz's Lemma.}\ For a non-dense subspace $X=X_1\ee+X_2\eee$ of a bicomplex Banach space $\mathcal{B}=\mathcal{B}_1\ee+\mathcal{B}_2\eee$ , given $0<r < 1$, there is an element $y \in  \mathcal{B}$ with $\|y\|_{\h} = 1$ and $\inf\limits_{x \in X}\|x - y\|_{\h}= r$.
\end{lemma}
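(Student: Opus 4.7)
\emph{Strategy.} The plan is to reduce the bicomplex lemma to two applications of the classical complex Riesz Lemma, one in each idempotent slot, and then reassemble the witness via the idempotent decomposition. Because every ingredient of the statement --- the subspace $X$, the hyperbolic norm on $\mathcal{B}$, and the hyperbolic infimum defined just above the lemma --- is built componentwise, this reduction is essentially forced by the framework already laid down in Definitions~5 and~6 and the preceding paragraph on hyperbolic infima.

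\emph{Step 1: Translate the hypothesis.} Decompose $X = X_1\ee + X_2\eee \subseteq \mathcal{B}_1\ee + \mathcal{B}_2\eee$. Since hyperbolic-norm convergence is equivalent to simultaneous norm convergence in both idempotent components, the hyperbolic closure of $X$ equals $\overline{X_1}\ee + \overline{X_2}\eee$. For the target equality $\inf\|x-y\|_\h = r = r\ee + r\eee$ to be achievable, I would take the hypothesis to mean that \emph{each} of $X_1$ and $X_2$ is a proper non-dense subspace of its respective ambient Banach space; otherwise the infimum in the corresponding slot is forced to $0$ and cannot equal $r>0$.

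\emph{Step 2: Apply classical Riesz componentwise and assemble.} For $i=1,2$, apply the classical Riesz Lemma in $\mathcal{B}_i$ to $X_i$ to obtain $y_i \in \mathcal{B}_i$ with $\|y_i\|=1$ and $\inf_{x_i\in X_i}\|x_i - y_i\| \geq r$. To upgrade the inequality to an equality, I would use a standard continuity plus connectedness argument: on the unit sphere of $\mathcal{B}_i$ the function $y_i \mapsto \mathrm{dist}(y_i, X_i)$ is continuous, takes values arbitrarily close to $1$ (by Riesz) and arbitrarily close to $0$ (by selecting unit vectors near $X_i$), so it attains every value in the open interval $(0,1)$ --- in particular $r$. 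Then set $y := y_1\ee + y_2\eee$.

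\emph{Step 3: Verify both conclusions.} By the definition of the hyperbolic norm,
\[ \|y\|_\h = \|y_1\|\ee + \|y_2\|\eee = \ee + \eee = 1. \]
For the infimum, use the definition $\inf\{\eta_1\ee + \eta_2\eee\} = \inf\{\eta_1\}\ee + \inf\{\eta_2\}\eee$ stated in the paragraph preceding the lemma, together with the fact that $x \in X$ ranges independently over $X_1$ and $X_2$ in its two idempotent components, to obtain
\[ \inf_{x \in X}\|x - y\|_\h \;=\; \bigl(\inf_{x_1\in X_1}\|x_1-y_1\|\bigr)\ee + \bigl(\inf_{x_2\in X_2}\|x_2-y_2\|\bigr)\eee \;=\; r\ee + r\eee \;=\; r. \]

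\emph{Main obstacle.} The componentwise reduction is straightforward; the real delicacies are (a) arranging exact equality with $r$ rather than merely the inequality $\geq r$ that classical Riesz delivers, which requires the continuity/intermediate-value upgrade above and implicitly uses connectedness of the unit sphere in each $\mathcal{B}_i$, and (b) the interpretation of ``non-dense.'' A reader should be cautioned that $\overline{X} \neq \mathcal{B}$ alone only forces $\overline{X_i} \neq \mathcal{B}_i$ for at least one $i$, whereas the proof genuinely needs it for both.
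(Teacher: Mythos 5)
Your proposal follows essentially the same route as the paper's own proof: apply the classical Riesz Lemma in each idempotent component to get unit vectors $y_1, y_2$ at distance $r$ from $X_1, X_2$, set $y = y_1\ee + y_2\eee$, and use the componentwise definitions of the hyperbolic norm and hyperbolic infimum to conclude. You are in fact more careful than the paper on the two points it silently assumes --- upgrading the classical ``$\geq r$'' to exact equality via the continuity/intermediate-value argument, and noting that non-density of $X$ must be read as non-density of \emph{both} $X_1$ and $X_2$ (and implicitly $X_i \neq \{0\}$) for the stated equality to be attainable.
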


\begin{proof} As mentioned, the lemma holds for each of the associated operators on the complex Banach spaces: $X_1$ in $\mathcal{B}_1$ and $X_2$ in $\mathcal{B}_2$, in the sense that it produces a vector $y_1 \in \mathcal{B}_1$ and $y_2 \in \mathcal{B}_2$ with $\|y_i\| = 1$ and $\inf\limits_{x \in X_i}\|x - y_i\|= r$ for $i=1,2$.\  Then the vector $y=y_1\ee+y_2\eee$ satisfies the lemma, since  $\|y\|_{\h} = 1\ee+1\eee=1$ and $\inf\limits_{x \in X}\|x - y\|_{_\h}= r\ee+r\eee=r$. 
\end{proof}

\begin{lemma}  For a compact operator $K=K_1\ee+K_2\eee$  on a bicomplex Banach space $\mathcal{B}=\mathcal{B}_1\ee+\mathcal{B}_2\eee$ and for which $I-K$ is one-to-one, the range of $I-K$ is closed in the hyperbolic norm.
\end{lemma}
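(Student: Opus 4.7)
The plan is to reduce the statement to the classical complex Banach space result (compact $K_i$ with $I - K_i$ one-to-one has closed range in $\mathcal{B}_i$) by decomposing everything through the idempotent representation, and then to transfer the closed-range conclusion back up using the fact that hyperbolic norm convergence in $\mathcal{B}$ is componentwise norm convergence in $\mathcal{B}_1$ and $\mathcal{B}_2$.

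First, I would establish that the hypothesis passes to each idempotent component. Lemma 1 already gives that $K_1$ and $K_2$ are compact on $\mathcal{B}_1$ and $\mathcal{B}_2$. To see that $I - K_1$ and $I - K_2$ are individually one-to-one, suppose $(I - K_1)v_1 = 0$ for some $v_1 \in \mathcal{B}_1$. Then the bicomplex vector $v = v_1 \ee + 0 \cdot \eee$ satisfies $(I - K)v = (I - K_1)v_1 \ee + (I - K_2)(0) \eee = 0$, and since $I - K$ is one-to-one by assumption, $v = 0$, forcing $v_1 = 0$. The argument for $I - K_2$ is symmetric. Hence the classical complex Banach space theorem applies to each component, giving that $\mathrm{ran}(I - K_1)$ is closed in $\mathcal{B}_1$ and $\mathrm{ran}(I - K_2)$ is closed in $\mathcal{B}_2$.

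Next I would identify the range of $I - K$ in $\mathcal{B}$. For any $v = v_1 \ee + v_2 \eee \in \mathcal{B}$, the Fundamental Theorem for Bicomplex Matrices (extended to operators, exactly as described between Definition 5 and Lemma 1) gives $(I - K)v = (I - K_1)v_1 \ee + (I - K_2)v_2 \eee$, so $\mathrm{ran}(I - K) = \mathrm{ran}(I - K_1)\ee \oplus \mathrm{ran}(I - K_2)\eee$. To verify this set is closed in the hyperbolic norm, take any sequence $w^{(n)} = (I-K)v^{(n)}$ converging in hyperbolic norm to some $w = w_1 \ee + w_2 \eee \in \mathcal{B}$. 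Because $\|w^{(n)} - w\|_{\h} = \|w_1^{(n)} - w_1\|\ee + \|w_2^{(n)} - w_2\|\eee$, hyperbolic convergence forces each complex-norm component sequence $(I - K_i)v_i^{(n)}$ to converge to $w_i$ in $\mathcal{B}_i$. The closedness of $\mathrm{ran}(I - K_i)$ already established then yields $u_i \in \mathcal{B}_i$ with $(I - K_i)u_i = w_i$ for $i = 1,2$, and setting $u = u_1 \ee + u_2 \eee$ gives $(I - K)u = w$.

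The main obstacle here is not technical depth but bookkeeping: I must be careful that ``one-to-one'' as stated for the bicomplex operator $I - K$ really does pass to each idempotent slot (which hinges on allowing one idempotent component of a test vector to be zero, so the eigenvector caveat in Definition 7 is not in play), and that the hyperbolic-norm convergence really does decouple as two independent complex-norm convergences. Once those two equivalences are in hand, the lemma follows by invoking the classical complex result in each slot and re-assembling.
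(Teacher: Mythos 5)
Your proposal is correct and follows essentially the same route as the paper's proof: decompose $K$ and the hypothesis into idempotent components, invoke the classical complex Banach space result for each $K_i$ on $\mathcal{B}_i$, and recombine the closed ranges using the componentwise nature of hyperbolic norm convergence. In fact you supply details the paper leaves implicit, notably the argument that injectivity of $I-K$ forces injectivity of each $I-K_i$ via test vectors with one zero idempotent component.
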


\begin{proof} The lemma holds for operators on a complex Banach space; see, for example, \cite{Garrett}. Apply that fact to each of $K_1$ on $\mathcal{B}_1\ee$ and $K_2$ on $\mathcal{B}_2\eee$, producing, since $I-K_1$ must be one-to-one on $\mathcal{B}_1\ee$ and $I-K_2$ must be one-to-one on $\mathcal{B}_2\eee$, closed ranges for $I-K_1$ and $I-K_2$. But then the range of $I-K$ is closed as desired. 
\end{proof}

\section{Bicomplex Compact Operators on Hilbert Spaces}

Similar as for the material in the last section, important theorems describe the structure of a bicomplex compact operator $K$ on a bicomplex Hilbert space $\mathcal{H}=\mathcal{H}_1\ee+\mathcal{H}_2\eee$. The theorems result from corresponding theory for each complex compact operator $K_1$ and $K_2$ that form the idempotent components of $K$ and act on the complex Hilbert spaces $\mathcal{H}_1$ and $\mathcal{H}_2$, respectively.\  

\begin{theorem} For any operator $T=T_1\ee+T_2\eee$  on a bicomplex Hilbert space $\mathcal{H}=\mathcal{H}_1\ee+\mathcal{H}_2\eee$, if there is a sequence $\{K_n \}_{n=1}^\infty$ of bicomplex compact operators with $\| T - K_n \|_{_\h} \to 0$, then $T$ is compact.
\end{theorem}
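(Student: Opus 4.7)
The plan is to reduce the claim to the classical fact that the set of compact operators on a complex Hilbert space is closed in the operator norm, and then invoke Lemma 1 to package the conclusion back into the bicomplex setting.

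First, I would decompose everything into idempotent components: write $T = T_1\ee + T_2\eee$ and $K_n = K_{n,1}\ee + K_{n,2}\eee$ for each $n$. By Lemma 1, the bicomplex compactness of each $K_n$ is equivalent to the simultaneous complex compactness of $K_{n,1}$ on $\mathcal{H}_1$ and $K_{n,2}$ on $\mathcal{H}_2$. So the hypothesis gives me two sequences of genuinely complex compact operators sitting inside the idempotent slots.

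Next, I would translate the hyperbolic norm convergence into componentwise operator norm convergence. Using the definition $\|T - K_n\|_{_\h} = \|T_1 - K_{n,1}\|\ee + \|T_2 - K_{n,2}\|\eee$ together with the remark in the paper that hyperbolic norm convergence in $\mathcal{B}$ is equivalent to complex norm convergence in each idempotent component, the hypothesis $\|T - K_n\|_{_\h} \to 0$ forces $\|T_1 - K_{n,1}\| \to 0$ and $\|T_2 - K_{n,2}\| \to 0$ in the classical operator norm on $\mathcal{H}_1$ and $\mathcal{H}_2$ respectively.

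Now I would apply the standard theorem from complex operator theory: the set of compact operators on a complex Hilbert space is norm-closed (this is typically proved via a diagonal argument producing a Cauchy, hence convergent, subsequence from the images of any bounded sequence). Applying this to each component, $T_1$ is compact on $\mathcal{H}_1$ and $T_2$ is compact on $\mathcal{H}_2$. Finally, invoking the reverse direction of Lemma 1, compactness of both components $T_1$ and $T_2$ yields compactness of $T = T_1\ee + T_2\eee$ on $\mathcal{H}$, completing the proof.

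The main obstacle, such as it is, is simply verifying carefully that hyperbolic norm convergence to zero really is equivalent to the two real-valued coefficient sequences tending to zero; once that bookkeeping is established, the proof is essentially a two-line reduction to the classical complex result combined with Lemma 1 applied twice (once to the hypothesis and once to the conclusion).
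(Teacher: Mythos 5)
Your proposal is correct and follows essentially the same route as the paper's proof: decompose into idempotent components, use the hyperbolic operator norm identity to get componentwise norm convergence, invoke the classical norm-closedness of compact operators on each complex Hilbert space, and conclude via Lemma 1. No gaps; this matches the paper's argument step for step.
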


\begin{proof} Since the (hyperbolic) operator norm of any bicomplex operator $Y$ is calculated via idempotent complex operator norms as $\| Y \|_{_\h} =\| Y_1\|\ee+\|Y_2 \|\eee$, there are two (idempotent component) sequences of complex compact operators $K_{n,i}$ such that  $\| T_i - K_{n,i} \| \to 0$ for $i=1,2$.\  Then, since the theorem is true for any operator on a complex Hilbert space (cf. \cite[p. 367]{Johnston}), it must be the case that the idempotent components $T_1$ and $T_2$ are both compact.\ But then $T$ is compact by Lemma 1. 
\end{proof}

Since every finite-rank bicomplex operator in $\mathcal{B}(\mathcal{H})$ (of the form \centre $T(f) = \sum\limits_{i=1}^m \sigma_i \langle f, g_i \rangle g_i $\stopcentre  for hyperbolic nonnegative bicomplex $\sigma_i$ and $g_i$   chosen as orthogonal in $\mathcal{H}$) is compact (as it splits into compact operators on $\mathcal{H}_1$ and $\mathcal{H}_2$), Theorem 6 says any bicomplex operator norm limit of a bicomplex finite-rank operator is compact. But just as for complex compact operators on complex Hilbert spaces, the converse is also true, as the next theorem points out. See \cite[p. 11]{Charak2} for a parallel statement of the next theorem.

\begin{theorem} Every compact operator $K$ on a bicomplex Hilbert space $\mathcal{H}$ is an operator norm limit of finite-rank bicomplex operators.\ In fact, $K$ takes the form \centre  $K(f)=\sum\limits_{i=1}^\infty \sigma_i \langle f, g_i \rangle g_i $\stopcentre  for hyperbolic nonnegative bicomplex $\sigma_i$, and with $g_i$   chosen as orthogonal in $\mathcal{H}$.\ 
\end{theorem}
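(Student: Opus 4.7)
The plan is to mirror the pattern already established in Lemma 1, Theorem 5, and Theorem 6: decompose $K$ into its idempotent components, apply the classical Hilbert-space spectral representation of a compact operator to each component, then reassemble the pieces as a single bicomplex series, using the Fundamental Theorem for Bicomplex Matrices (and its operator analogue) to check that the series really reproduces $K$.

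First, by Lemma 1, writing $K = K_1 \ee + K_2 \eee$ gives two complex compact operators $K_i$ on the complex Hilbert spaces $\mathcal{H}_i$. The classical result for a compact operator on a complex Hilbert space (cited at \cite[p. 11]{Charak2} and available in standard references) produces, for each $i = 1,2$, nonnegative real scalars $\sigma_{i,n}$ and an orthogonal sequence $\{g_{i,n}\}$ in $\mathcal{H}_i$ such that
\[
K_i(f_i) \;=\; \sum_{n=1}^{\infty} \sigma_{i,n}\, \langle f_i, g_{i,n}\rangle\, g_{i,n},
\]
with the partial-sum operators converging to $K_i$ in the complex operator norm.

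Second, I would set $\sigma_n = \sigma_{1,n}\ee + \sigma_{2,n}\eee$ and $g_n = g_{1,n}\ee + g_{2,n}\eee$. The $\sigma_n$ are hyperbolic nonnegative by the definition of $\h^+$, and orthogonality of $\{g_n\}$ in $\mathcal{H}$ is immediate from the componentwise definition of the bicomplex inner product in Definition 5: for $n\neq m$,
\[
\langle g_n, g_m\rangle \;=\; \langle g_{1,n}, g_{1,m}\rangle\ee + \langle g_{2,n}, g_{2,m}\rangle\eee \;=\; 0.
\]
For any $f = f_1\ee + f_2\eee \in \mathcal{H}$, componentwise multiplication of idempotents collapses the cross terms and yields
\[
\sum_{n=1}^{\infty} \sigma_n \langle f, g_n\rangle g_n \;=\; \Bigl(\sum_n \sigma_{1,n}\langle f_1, g_{1,n}\rangle g_{1,n}\Bigr)\ee + \Bigl(\sum_n \sigma_{2,n}\langle f_2, g_{2,n}\rangle g_{2,n}\Bigr)\eee \;=\; K_1 f_1 \ee + K_2 f_2 \eee \;=\; Kf.
\]
Finally, since the hyperbolic operator norm of the tail satisfies
$\bigl\| K - \sum_{n=1}^{N} \sigma_n \langle \cdot, g_n\rangle g_n \bigr\|_{_\h}
= \bigl\| K_1 - \sum_{n=1}^{N} \sigma_{1,n}\langle\cdot, g_{1,n}\rangle g_{1,n}\bigr\|\ee + \bigl\| K_2 - \sum_{n=1}^{N} \sigma_{2,n}\langle\cdot, g_{2,n}\rangle g_{2,n}\bigr\|\eee$,
and each complex operator norm tends to $0$ by the classical statement, the partial sums converge to $K$ in the hyperbolic operator norm.

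The main obstacle I anticipate is purely bookkeeping: the two classical series for $K_1$ and $K_2$ may have different lengths (for instance, one component may be finite-rank while the other is not), so a priori the indices for $\{g_{1,n}\}$ and $\{g_{2,n}\}$ need not line up. I would address this by padding the shorter sequence with zero singular values $\sigma_{i,n} = 0$ attached to any convenient extension of $\{g_{i,n}\}$ to a larger orthogonal set in $\mathcal{H}_i$ (which exists since $\mathcal{H}_1$ and $\mathcal{H}_2$ have the same dimension by Definition 5). With this alignment the two idempotent sums are indexed by a common $n$, the combined $g_n$ is genuinely orthogonal, and the $\sigma_n$ remain hyperbolic nonnegative. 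No other subtlety arises, because both the finite-rank property and the operator-norm limit behave idempotent-componentwise, exactly as in the proofs of Theorems 5 and 6.
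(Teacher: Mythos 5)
Your proposal is correct and follows essentially the same route as the paper's own proof: decompose $K=K_1\ee+K_2\eee$, apply the classical spectral representation of a complex compact operator to each idempotent component, and recombine the singular values and orthogonal vectors componentwise, with operator-norm convergence checked via the hyperbolic norm. Your explicit handling of mismatched series lengths by padding with zero singular values is a small bookkeeping point the paper leaves implicit, but it is the same argument.
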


\begin{proof} Write $K=K_1\ee+K_2\eee$ in its idempotent complex Hilbert space compact operator components, and express both $K_1$ and $K_2$ as operator norm limits (in each idempotent complex Hilbert space) of finite-rank operators:
\centre  $K_j(f)=\sum\limits_{i=1}^\infty \sigma_{i,j} \langle f, g_{i,j} \rangle g_{i,j} $, for $j=1,2$.  \stopcentre
Note $\sigma_{i,j}$ may be chosen as nonnegative and each set of vectors $\{g_{i,j}\}_{i=1}^\infty$ may be chosen as an orthogonal family of vectors for $j=1,2$.\ Recombining the idempotent structures gives
\centre  $K(f)=\sum\limits_{i=1}^\infty \sigma_i \langle f, g_i \rangle g_i $,  \stopcentre
where $\sigma_i= \sigma_{i,1}\ee+ \sigma_{i,2}\eee$ are nonnegative hyperbolic numbers and the set of vectors $\{g_i= g_{i,1}\ee+ g_{i,2}\eee\}_{i=1}^\infty$ forms an orthogonal family in the bicomplex inner product structure. 
\end{proof}

\section{Conclusion}

This paper's results continue analysis found already in many other articles that show how natural it is to use the idempotent structure on both the vector elements of a Hilbert or Banach space as well as the idempotent structure of operators. The idempotent approach, as opposed to misguided efforts that continue to examine the Euclidean rectangular coordinates, easily extends results in linear algebra and operator theory for complex Hilbert or Banach spaces to similar results on bicomplex spaces.\ This paper's theory decidedly confirms that a full study of bicomplex operators is possible as a complete generalization of operator theory on complex operators when using the idempotent structure. The authors urge continued investigations into bicomplex spaces and bicomplex function theory, using the idempotent structures and hyperbolic norms to extend known complex-valued results. 




\end{document}